\newcommand{\R}{\mathbb R}
 \newcommand{\N}{\mathbb N}
  \newcommand{\E}{\mathbb E}
\newcommand{\PP}{\mathbb P}
\newcommand{\calF} {\ensuremath {\mathcal{F}}}
\newcommand{\vp}{\varphi}
\newcommand \loc    {\text{loc}}
\newtheorem{theorem}{Theorem}[section]
 \newtheorem{remark}[theorem]{Remark}
\newtheorem{lemma}[theorem]{Lemma}
\newtheorem{definition}[theorem]{Definition}
\newtheorem{hypothesis}[theorem]{Hypothesis}
\begin{document}

\title{   Regularization by noise in one-dimensional continuity equation. }

\author{ Christian Olivera\footnote{Departamento de Matem\'{a}tica, Universidade Estadual de Campinas, Brazil.
E-mail:  {\sl  colivera@ime.unicamp.br}.
}}

\date{}

\maketitle

\textit{Key words and phrases.
Stochastic partial differential equation, Continuity  equation,  Regularization by noise, It\^o-Wentzell-Kunita formula, Low regularity.}

\vspace{0.3cm} \noindent {\bf MSC2010 subject classification:} 60H15, %SPDE
 35R60, %SPDE
 35F10, %Initial conditions for 1-order lin DE
 60H30. %applications of Stoch. An.

%\vspace{0.3cm} \noindent {\bf MSC2000 subject classification:} 60H10
%, 35F15,  60H25, 60H15  .

%
%%%%%%%%%%%%%%%%%%%%%%%%%%%%%%%%%%%%%%%%%%%%%%%%%%%%%%%%%%%%%%%%%%%%%%%%%%%%
\begin{abstract}
A linear stochastic continuity  equation with non-regular coefficients
is considered. We prove existence and uniqueness of strong
solution, in the probabilistic sense,  to the Cauchy problem when the vector field has low regularity,  in which the classical  DiPerna-Lions-Ambrosio theory of uniqueness   of distributional solutions  does not apply. We solve partially the open problem  that is the case 
when the vector-field has  random  dependence. In addition, we prove a stability result for the solutions.

 \end{abstract}
%%%%%%%%%%%%%%%%%%%%%%%%%%%%%%%%%%%%%%%%%%%%%%%%%%%%%%%%%%%%%%%%%%%%%%%%%%%%
%
\maketitle

%\begin{center}
%{\large
%Christian Olivera\footnote{Research  supported  FAEPEX 1324/12, FAPESP 2012/18739-0, 2012/18780-0  . }}\\
%
%\textit{Departamento de
% Matem\'{a}tica, Universidade Estadual de Campinas, \\
% Campinas - SP, Brasil}
%\\  e-mail:  colivera@imeunicamp.br}
%\end{center}

%%%%%%%%%%%%%%%%%%%%%%%%%%%%%%%%%%%%%%%%%%%%%%%%%%%%
\section {Introduction} \label{Intro}

Last decades the continuity equation  has attracted a lot
of scientific interest. The reason is that  arises in
a variety of domains such as biology, particle physics, population dynamics, crowd
modeling,  that can be modeled by the
 continuity/ transport equation,
\begin{equation}\label{trasports}
    \partial_t u(t, x) + div ( b(t,x)  u(t,x) ) = 0 \, ,
\end{equation}

\noindent where $u$ is the physical
quantity that evolves in time. Such quantities are for instance the vorticity of a fluid, or the density of a collection of
particles advected by a velocity field which is highly irregular, in the sense that it has a derivative given by
a distribution and a nonlinear dependence on the solution u. 

\noindent  When the coefficients are regular  the unique solution is found  by the method of characteristics.  
Recently research activity has been devoted to study continuity/transport equations with rough coefficients, showing a well-posedness result.
A complete theory of distributional solutions, including existence, uniqueness and
stability properties, is provided in the seminal works of DiPerna and Lions \cite{DL} and Ambrosio  \cite{ambrisio}.

The approach of DiPerna, Lions and Ambrosio relies on the theory of renormalized
solutions. Roughly speaking, renormalized solutions are distributional solutions to
which the chain rule applies in the sense that, for every suitable  $\beta$, $\beta(u)$ 
solves the following continuity equation :

\begin{equation}\label{renord}
    \partial_t \beta (u)+  b  \nabla \beta (u) + div(b) \beta^{\prime}(u) u  = 0 .
\end{equation}

 Whether distributional solutions are renormalized solutions depends on the regularity of $b$. In the paper by DiPerna and Lions was proved that  when $b$ has $W^{1,1}$ spatial regularity (together with a condition of boundedness on the divergence) the commutator lemma between smoothing convolution and  weak solution can be proved and, as a consequence, all $L^{\infty}$-weak solutions are renormalized.
 L. Ambrosio \cite{ambrisio} generalized the theory  to the case of only $BV$ regularity for b instead of $W^{1,1}$. 
Another approach giving explicit compactness estimates has been introduced
in \cite{cripa2}, and further developed in \cite{Bouchut1,jabe},   see also the references therein.
In the case of two-dimensional vector-field, we also refer to the work of F. Bouchut and L. Desvillettes \cite{Bouchut2} that treated the case of divergence free vector-field with continuous coefficient, and to \cite{MH} in which this result is extended to vector-field with $L_{loc}^{2}$ coefficients with a condition of regularity on the direction of the vector-field. We refer the readers to two excellent summaries in 
\cite{ambrisio2} and \cite{lellis}. For some recent developments see \cite{Cara} and  \cite{sies}. 

\medskip

In contrast with its deterministic counterpart, the singular stochastic continuity/transport equation with multiplicative noise is well-posed.  
The addition of a stochastic noise is often used to account for
numerical, empirical or physical uncertainties. The questions of regularizing effects and well-posedness by noise for (stochastic)
partial differential equations have attracted much interest in recent years. In \cite{AttFl11,Beck,Fre1,Fre2,FGP2,MNP14,Moli,NO}, well-posedness and regularization by linear multiplicative noise for continuity/transport equations, that is for

\begin{equation}\label{tr}
  \partial_t u(t, x) + Div  \big( ( b(t, x) + \frac{d B_{t}}{dt}) \cdot  u(t, x)  \big )= 0, 
\end{equation}

have been obtained. We refer to \cite{Moli} for more details on the literature. 

We report here the observation that a 
multiplicative noise as the one used in (\ref{tr})  is not enough to improve 
the regularity of solutions of the following stochastic conservation law 
$$
    \partial_t u(t, x) +  \partial_x u(t, x)  \big( u(t,x) + \frac{d B_{t}}{dt}(\omega)\big ) = 0 \, .
$$
Indeed, for this equation one can observe the appearance of shocks in finite time, just as for the deterministic  conservation law, 
see \cite{Flan1}. For a different approach related to stochastic scalar conservation laws, we address the reader 
to \cite{lions} and \cite{Lions2}. \\

 One of the gaps  the theory
has, in order to work with  the nonlinear systems where $b$ depend on some quantity of $u$, is that  in all  cases $b$
 is  deterministic vector field.

The purpose of the present paper is a
contribution to the following general question: can
one hope for an existence/uniqueness
theory in the case where $b$ is a stochastic process and  it has low regularity? We present one  positive result. More precisely. 
 we  study the continuity equation

\begin{equation}\label{trasport}
 \left \{
\begin{aligned}
    &\partial_t u(t, x) + Div  \big( ( b(t,x, \omega) + \frac{d B_{t}}{dt}) \cdot  u(t, x)  \big )= 0 \, ,
    \\[5pt]
    &u|_{t=0}=  u_{0} \, .
\end{aligned}
\right .
\end{equation}
Here, $(t,x) \in [0,T] \times \R$, $\omega \in \Omega$ is an element of the probability space $(\Omega, \PP, \calF)$, 
$b:\R_+ \times \R\times \R  \to \R$ is a given vector field, $B_{t}$ is a standard Brownian motion. 
The stochastic integration is to be understood in the Stratonovich sense.

\medskip The novelty  of our results is to  show existence and uniqueness of the solutions for one-dimensional stochastic continuity
equation (\ref{trasport})  when the vector field $b$ has random dependence  
 and when  it is bounded  and  integrable without assumptions on the divergence.  

\medskip
Here, of course, the first 
difficulty is in the existence part, since standard approximation schemes
 in general do not provide  existence without assumptions on the divergence in the class of $L^{2}$-solutions. The result is based on the regularization effect of the Brownian perturbation on the flow of the characteristics equation

\begin{equation}\label{ss}
X_{t}= x + \int_{0}^{t}   b(s, X_{s}(x),\omega) \ ds +  B_{t}.
\end{equation}

For the  key estimates on  the spatial weak derivatives of solutions of the SDE (\ref{ss})  we  use stochastic calculus  technique.

Let us describe in few words the strategy of the  uniqueness proof. It is based  on the fact that one primitive $V$ is regular and verifies
the transport equation
\begin{equation}\label{aux}
    \partial_t V(t, x) +  ( b(t,x,\omega) + \frac{d B_{t}}{dt}) \cdot  \nabla V(t, x)  = 0 \, .
\end{equation}

Then using a modified version of the commutator Lemma and the characteristics systems associated to the SPDE (\ref{aux}) we shall show that $V=0$ with initial condition equal to zero, which implies that
$u=0$.

Other pint in this paper is to show  a stability result for the Cauchy problem (\ref{trasport}). 

\medskip

Finally, we point two previous results on regularization by noise  
with random drift. In \cite{Cate} the authors obtain regularization by noise 
for ODEs with  random drift, for general noise including fractional Brownian motion, using 
rough path theory, however, the authors assume that the drift is in suitable Besov space. In \cite{Dub} the authors extend the It\^o-Tanaka trick  for random drift using Malliavin calculus and FBSDEs theory, in that paper the authors assume some
 Malliavin differentiability of the drift. 
\subsection{Hypothesis.}

In this paper we assume the following hypothesis:

\begin{hypothesis}\label{hyp1}
The vector field $b$ satisfies
\begin{equation}\label{cond3-1}
  b\in L^{\infty}(\Omega \times [0,T], L^{1}(\R)),
\end{equation}

\begin{equation}\label{cond3-3}
  b\in L^{\infty}(\Omega \times [0,T] \times  \R),
\end{equation}

\begin{equation}\label{cond3-2}
  b(t,x,\omega)= \int_{0}^{t} f(s,x,\omega) ds  + \int_{0}^{t} g(s,x,\omega) dB_s, 
\end{equation}

\noindent where 

\[
f \in L^{\infty}( \Omega , L^{1}([0,T] \times\R)) 
\]

and 

\[
g\in L^{\infty}( \Omega, L^{1}([0,T],L^{\infty}(\R) ))\cap L^{\infty}( \Omega \times [0,T],L^{1}(\R)).
\]

Moreover, the initial condition verifies 
\begin{equation}\label{weight}
  u_0 \in  L^2(\R)\cap L^1(\R) .
\end{equation}
\end{hypothesis}

\subsection{Notations}First, through of this paper, we fix a stochastic basis with a
$d$-dimensional Brownian motion $\big( \Omega, \mathcal{F}, \{
\mathcal{F}_t: t \in [0,T] \}, \mathbb{P}, (B_{t}) \big)$. Then, we recall to help the intuition, the following definitions 

$$
\begin{aligned}
\text{Itô:}&  \ \int_{0}^{t} X_s dB_s=
\lim_{n    \rightarrow \infty}   \sum_{t_i\in \pi_n, t_i\leq t}  X_{t_i}    (B_{t_{i+1} \wedge t} - B_{t_i}),
\\[5pt]
\text{Stratonovich:}&  \ \int_{0}^{t} X_s  \circ dB_s=
\lim_{n    \rightarrow \infty}   \sum_{t_i\in \pi_n, t_i\leq t} \frac{ (X_{t_i \wedge t   } + X_{t_i} ) }{2} (B_{t_{i+1} \wedge t} - B_{t_i}),
\\[5pt]
\text{Covariation:}& \ [X, Y ]_t =
\lim_{n    \rightarrow \infty}   \sum_{t_i\in \pi_n, t_i\leq t} (X_{t_i \wedge t   } - X_{t_i} )  (Y_{t_{i+1} \wedge t} - Y_{t_i}),
\end{aligned}
$$
where $\pi_n$ is a sequence of finite partitions of $ [0, T ]$ with size $ |\pi_n| \rightarrow 0$ and
elements $0 = t_0 < t_1 < \ldots  $. The limits are in probability, uniformly in time
on compact intervals. Details about these facts can be found in Kunita 
\cite{Ku2}. 

\subsection{It\^o-Wentzell-Kunita formula} 

We consider $X(t,,x, \omega)$ be a continous  $C^{3}$-process and 
continuous  $C^{2}$-semimartingale , for $x\in \R$, and $t\in [0,T]$ of the form 

\[
  X(t,x,\omega)=X_{0}(x) + \int_{0}^{t} f(s,x,\omega) ds  + \int_{0}^{t} g(s,x,\omega) \circ dB_s, 
 \]

If $Y(t,\omega)$   is  a continuous semimartingale, then $X(t,Y_{t},\omega)$
is a continuous semimartingale and the following formula holds

\[
  X(t,Y_{t},\omega)=X_{0}(Y_{0}) + \int_{0}^{t} f(s,Y_{s},\omega) ds  + \int_{0}^{t} g(s,Y_{s},\omega) \circ dB_s
\]
\[
+ \int_{0}^{t} (\nabla_{x}X)(s,Y_{s},\omega)\circ dY_{s},
\]

For more details see  Theorem 8.3 of \cite{Ku2}.

\subsection{Motivations.}
Apart from the theoretical importance of such
an  extension,  the  main  motivation  comes  from  the  study  of  many  nonlinear  partial
differential equations of the mathematical physics.  In various physical models of the
mechanics of fluids it is essential to deal with densities or with velocity fields which
are  not  smooth and this corresponds to effective real world situations. This is our motivation to study the effect of the noise in transport/continuity equation 
when the drift $b$ is a stochastic process.  In particular, we are interested in to show uniqueness of weak solutions for non-local conservation law, that is for the equation

\[
  \partial_t u(t, x) + Div  \big( ( F(t,x,(K\ast u)(x))  + \frac{d B_{t}}{dt}) \cdot  u(t, x)  \big )= 0, 
\]

\noindent where $K$ is a regular kernel. Our conjecture is the uniqueness for type of the conservation law in  opposition to the deterministics theory where the solutions are unique under entropy condition. 

%%%%%%%%%%%%%%%%%%%%%%%%%%%%%%%%%%%%%%%%%%%%%%%%%%%%
\section{Estimation for the  flow.}

To begin with , let us consider the stochastic differential equation in $\R$, that is to say,
given $s \in [0,T ]$  and  $x \in \mathbb{R}$, we consider 
\begin{equation}\label{itoass}
X_{s,t}(x)= x + \int_{s}^{t}   b(u, X_{s,u}(x),\omega) \ du +  B_{t}-B_{s},
\end{equation}
where $X_{s,t}(x)= X(s,t,x)$ (also $X_{t}(x)= X(0,t,x)$).
In particular, for $m \in \N$ and $0 < \alpha < 1$, we assume
\begin{equation}
\label{DRIFTREGULAR}
   b \in L^1([0,T]; (C^{m,\alpha}(\R))). 
\end{equation}
It is well known that, under the above regularity 
of the drift vector field $b$, the 
stochastic flow $X_{s,t}$ is a $C^m$ diffeomorphism
(see for example  \cite{Chow,Ku}). Moreover, the 
inverse $Y_{s,t}:=X_{s,t}^{-1}$ 
satisfies the following backward stochastic
differential equations,
\begin{equation}\label{itoassBac}
Y_{s,t}= y - \int_{s}^{t}   b(u, Y_{u,t}) \ du  - (B_{t}-B_{s}),
\end{equation}
for $0\leq s\leq t$.

SDEs with discontinuous coefficients and driven by Brownian motion  have been an important area of study in 
stochastic analysis and  has been a very active topic of research in the last years.The method of stochastic characteristics  may be employed to prove uniqueness of solutions of  the stochastic transport/ continuity equation
under weak regularity hypotheses on the drift coefficient. 
 In this way we have the next estimation. 

\begin{lemma}\label{lemma pe}
Assume   $b\in C_b^{\infty}(\R)$ and that satisfies the hypothesis \ref{hyp1}. Then for  $T>0$  there exists a  constant 
$C$  such that
\begin{align}\label{eq0}
\mathbb{E}\bigg[\bigg|\frac{d}{dx}X_{s,t}(x)\bigg|^{-1}\bigg] \leq C.
\end{align}

where C depends on  $T$, $\|b\|_{L^{\infty}([0,T]\times \Omega, L^1(\R))}$, 
$ \|b\|_{  L^{\infty}(\Omega \times [0,T] \times  \R) }$, $ \|f\|_{L^{\infty}(\Omega, L^{1}([0,T]\times \R))}$,
$\|g\|_{L^{\infty}(\Omega,L^{1}([0,T],L^{\infty}(\R)) )}$,  $  \|g\|_{L^{\infty}(\Omega\times [0,T],L^{1}(\R))}$.

\end{lemma}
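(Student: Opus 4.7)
The plan is to differentiate the SDE in $x$ and reduce the estimate to an exponential moment bound. Since $b\in C_b^{\infty}$, the flow $X_{s,t}$ is a smooth diffeomorphism and its Jacobian $J_{s,t}(x):=\frac{d}{dx}X_{s,t}(x)$ solves the linear random ODE $dJ_{s,t}=\partial_x b(t,X_{s,t}(x))J_{s,t}\,dt$ with $J_{s,s}(x)=1$, so
\[
J_{s,t}(x)^{-1}=\exp\left(-\int_s^t \partial_x b(u,X_{s,u}(x))\,du\right).
\]
The task is therefore to control the right-hand side by a constant that depends only on the norms listed in the statement, and \emph{not} on the $C^{\infty}$ seminorms of $b$.

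The main idea is to eliminate the pointwise derivative $\partial_x b$ by applying It\^o--Wentzell to a suitable primitive. Set
\[
V(t,x):=2\int_{-\infty}^{x} b(t,y)\,dy,
\]
so that $V_x=2b$ and $V_{xx}=2\,\partial_x b$. By \eqref{cond3-1}, $V$ is uniformly bounded by $2\|b\|_{L^{\infty}([0,T]\times\Omega,L^1(\R))}$. Stochastic Fubini applied to \eqref{cond3-2} yields the It\^o decomposition $V(t,x)=V(0,x)+\int_0^t \tilde F(u,x)\,du+\int_0^t \tilde G(u,x)\,dB_u$, with $\tilde F(u,x)=2\int_{-\infty}^x f(u,y)\,dy$ and $\tilde G(u,x)=2\int_{-\infty}^x g(u,y)\,dy$, and the hypotheses on $f$ and $g$ ensure that $\int_0^T\|\tilde F(u,\cdot)\|_{L^\infty_x}\,du$, $\sup_u\|\tilde G(u,\cdot)\|_{L^\infty_x}$ and $\int_0^T\|\partial_x\tilde G(u,\cdot)\|_{L^\infty_x}\,du$ are all finite, bounded by the stated norms.

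Applying the It\^o--Wentzell formula to $V(t,X_{s,t}(x))$ and using $V_x=2b$, $V_{xx}=2\,\partial_x b$, $\partial_x\tilde G=2g$, one can solve for the integral of $\partial_x b$ along the flow to obtain
\[
-\int_s^t \partial_x b(u,X_{s,u})\,du = V(s,x) - V(t,X_{s,t}) + \int_s^t R(u,X_{s,u})\,du + M_{s,t},
\]
where $R=\tilde F+2b^2+2g$ and $M_{s,t}=\int_s^t(\tilde G+2b)(u,X_{s,u})\,dB_u$. The boundary terms and the drift $\int_s^t R(u,X_{s,u})\,du$ are pointwise dominated by a deterministic constant depending only on the listed norms, while $M_{s,t}$ is a stochastic integral whose integrand is almost surely uniformly bounded.

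Consequently $[M,M]_{s,t}$ is bounded by a deterministic constant $C_2$, so Novikov's criterion gives that $\exp(M_{s,t}-\tfrac12[M,M]_{s,t})$ is a true martingale of mean one, whence $\mathbb{E}[e^{M_{s,t}}]\le e^{C_2/2}$. Combining with the deterministic bound on the other terms yields \eqref{eq0}. The delicate step is the It\^o--Wentzell bookkeeping, specifically the covariation between the semimartingale $V(\cdot,x)$ and the Brownian part of $X_{s,t}$: this is exactly what produces the $\partial_x\tilde G=2g$ contribution, and it is what allows the final bound to depend only on the prescribed norms of $b$, $f$ and $g$ rather than on derivatives of $b$.
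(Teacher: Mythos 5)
Your argument is correct and follows essentially the same route as the paper: both use the primitive of $b$ together with the It\^o--Wentzell--Kunita formula along the flow to trade $\int_s^t \partial_x b(u,X_{s,u})\,du$ for boundary terms, a drift controlled by the stated norms, and a stochastic integral with deterministically bounded quadratic variation, which is then handled by an exponential-martingale argument. The only (cosmetic) difference is that you merge the two stochastic integrals into a single martingale and apply Novikov once, whereas the paper keeps two separate Dol\'eans-Dade exponentials and splits them by H\"older's inequality.
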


\begin{proof}
We consider the SDE associated to the vector field $b$ :
\begin{equation*}
d X_t = b (t,X_t,\omega) \, dt + d B_t \, ,  \hspace{1cm}   X_s = x \,.
\end{equation*}

We note that $\partial_x X_{s,t}$ satisfies

\[
\partial_x X_{s,t}=\exp\bigg\{ \int_{s}^{t}  b^{\prime} (X_{s,u}) \  du  \bigg\}. 
\]

We  denoted

 $$\tilde{b}(t,z,\omega)=\int_{\infty}^z b(t,y,\omega)dy,$$

 $$\tilde{g}(t,z,\omega)=\int_{\infty}^z g(t,y,\omega)dy,$$

$$\tilde{f}(t,z,\omega)=\int_{\infty}^z f(t,y,\omega)dy.$$

 Applying the It\^o-Wentzell-Kunita formula   to $\tilde{b}$ , see Theorem 8.3 of \cite{Ku2}, we have

\[
\tilde{b}(t,X_{s,t},\omega)=\tilde{b}(s,x,\omega)+ \int_s^t \tilde{f}(u,X_{s,u},\omega) \ du +
 \int_s^t \tilde{g}(u,X_{s,u},\omega) \ dB_u +\int_s^t  b^{2}(u,X_{s,u},\omega) \ du  
\]
\begin{align*}
+ \int_s^t  g(u,X_{s,u},\omega) \ du + \int_s^t  b(u,X_{s,u},\omega) \ dB_u +  \frac{1}{2} \int_s^t b'(u,X_{s,u},\omega)  \ du\,.
\end{align*}

\noindent We set
\begin{align*}
\mathcal{E}\bigg(\int_s^t b(u,X_{s,u},\omega)dB_u\bigg)=\exp\bigg\{\int_s^t b(u,X_{s,u},\omega)dB_u-\frac{1}{2} \int_s^{t} b^2(u,X_{s,u},\omega)du \bigg\}, 
\end{align*}

and

\begin{align*}
\mathcal{E}\bigg(\int_s^t \tilde{g}(u,X_{s,u},\omega)dB_u\bigg)=\exp\bigg\{\int_s^t \tilde{g}(u,X_{s,u},\omega)dB_u-\frac{1}{2} \int_s^{t} \tilde{g}^{2}(u,X_{s,u},\omega)du \bigg\}. 
\end{align*}

 Now, we observe

\begin{equation}\label{uno}
\|\tilde{b}\|_{\infty}\leq \|b\|_{L^{\infty}([0,T]\times \Omega, L^1(\R))} ,
\end{equation}

\begin{equation}\label{dos}
\| \int_s^t  b^{2}(X_{s,u}) \ du \|_{\infty}\leq C  \|b\|_{  L^{\infty}(\Omega \times [0,T] \times  \R) }^{2} ,
\end{equation}

\begin{equation}\label{tres}
\| \int_s^t  \tilde{f}(u,X_{s,u},\omega) \ du \|_{\infty}\leq C  \|f\|_{L^{\infty}(\Omega, L^{1}([0,T]\times \R))},
\end{equation}

\begin{equation}\label{cuat}
\| \int_s^t  g(u,X_{s,u},\omega) \ du \|_{\infty}\leq C \|g\|_{L^{\infty}(\Omega,L^{1}([0,T],L^{\infty}(\R)) )},
\end{equation}

\begin{equation}\label{cin}
\| \int_s^t  |\tilde{g}(u,X_{s,u},\omega)|^{2} \ du \|_{\infty}\leq C  \|g\|_{L^{\infty}(\Omega\times [0,T],L^{1}(\R))}^{2}.
\end{equation}

 From (\ref{uno}), (\ref{dos}), (\ref{tres}), (\ref{cuat}), (\ref{cin}) and   H\"older inequality
we have

\begin{align}
\E\bigg[\bigg|\frac{d X_{s,t}}{dx}(x)\bigg|^{-1}\bigg ] & =
\E\bigg[\exp\bigg\{ \frac{1}{2}\bigg[- \tilde{b}(t,X_{s,t},\omega) +\tilde{b}(s,x,\omega)  +\int_s^t  b^{2}(u,X_{s,u},\omega) \ du   \nonumber\\
&  + \int_s^t b(u,X_{s,u},\omega) dB_u +  \int_s^t \tilde{f}(u,X_{s,u},\omega) \ du + \nonumber\\& 
+ \int_s^t g(u,X_{s,u},\omega) \ du + \int_s^t \tilde{g}(u,X_{s,u},\omega) dB_u \bigg]\bigg\} 
\nonumber\\&  \leq C
\E\bigg[\exp\bigg\{ \frac{1}{2}\bigg[\int_s^t b(u,X_{s,u},\omega) dB_u- \frac{1}{2}\int_s^t  b^{2}(u,X_{s,u},\omega) \ du   \nonumber\\
&  +  \int_s^t \tilde{g}(u,X_{s,u},\omega) dB_u- \frac{1}{2} \int_s^t \tilde{g}^{2}(u,X_{s,u},\omega) \ du  \bigg]\bigg\} 
\nonumber\\& 
    \leq C \E \bigg[ \mathcal{E}\bigg(\int_s^t b(u,X_{s,u},\omega)dB_u\bigg) \bigg] \times
 \E \bigg[ \mathcal{E}\bigg(\int_s^t \tilde{g}(u,X_{s,u},\omega)dB_u\bigg) \bigg]
\nonumber\\ & 
\end{align}

Finally we observe  that the  processes  $ \mathcal{E}\bigg(\int_0^t b(u,X_{s,u},\omega)dB_s\bigg)$,
 $ \mathcal{E}\bigg(\int_0^t \tilde{g}(u,X_{s,u},\omega)dB_s\bigg)$  are martingales with expectation equal to one.  From this  we conclude our lemma.
\end{proof}

\begin{remark}The same results is valid for the backward flow $Y_{s,t}$  since it is  solution of the same SDE driven by 
the drifts $-b$.
\end{remark}

\section{$L^{2}$- Solutions.}

\subsection{Definition of solutions}
\begin{definition}\label{defisoluH}
A stochastic process $u\in   L^\infty([0,T],L^{2}( \Omega\times \R)) \cap L^{1}(\Omega\times [0,T]\times \R)$  is called a  $L^{2}$- weak solution of the Cauchy problem \eqref{trasport} when: For any $\varphi \in C_0^{\infty}(\R)$, the real valued process $\int  u(t,x)\varphi(x)  dx$ has a continuous modification which is an $\mathcal{F}_{t}$-semimartingale, and for all $t \in [0,T]$, we have $\mathbb{P}$-almost surely
\begin{equation} \label{DISTINTSTR}
\begin{aligned}
    \int_{\R} u(t,x) \varphi(x) dx = &\int_{\R} u_{0}(x) \varphi(x) \ dx
	+ \int_{0}^{t} \!\! \int_{\R}   u(s,x)   \, b(s,x,\omega) \partial_x \varphi(x)  dx ds
\\[5pt]
    & + \int_{0}^{t} \!\! \int_{\R}   u(s,x) \ \partial_x \varphi(x) \ dx \, {\circ}{dB_s} \, .
\end{aligned}
\end{equation}
\end{definition}

\begin{remark}\label{lemmaito}
Using the same idea as in Lemma 13 \cite{FGP2}, one can write the problem (\ref{trasport}) in It\^o form as follows, a  stochastic process $u\in   L^\infty([0,T],L^{2}( \Omega\times \R))\cap L^{1}(\Omega\times [0,T]\times \R) $ is  a $ L^{2}$- weak solution  of the SPDE (\ref{trasport}) iff for every test function $\varphi \in C_{0}^{\infty}(\mathbb{R})$, the process $\int u(t, x)\varphi(x) dx$ has a continuous modification which is a $\mathcal{F}_{t}$-semimartingale and satisfies the following It\^o's formulation

\[
\begin{aligned}
    \int_{\R} u(t,x) \varphi(x) dx = &\int_{\R} u_{0}(x) \varphi(x) \ dx
	+ \int_{0}^{t} \!\! \int_{\R}   u(s,x)   \, b(s,x,\omega) \partial_x \varphi(x) \ dx ds
\\[5pt]
    & + \int_{0}^{t} \!\! \int_{\R}   u(s,x) \ \partial_x \varphi(x) \ dx \, dB_s \,  + \frac{1}{2} \int_{0}^{t} \!\! \int_{\R}   u(s,x) \ \partial_x^{2} \varphi(x) \ dx \, ds.
\end{aligned}
\]

\end{remark}

%%%%%%%%%%%%%%%%%%%%%%%%%%%%%%%%%%%%% Unicidade %%%%%%%%%%%%%%%%%%%%%%%%%%%%%%

\subsection{Existence.}

The goal of this section is to prove general existence result for stochastic continuity equation without assumptions on the divergence.

\begin{lemma}
Assume that hypothesis \ref{hyp1} holds. Then there exist $L^2$-weak solutions of the Cauchy problem \eqref{trasport}.
\end{lemma}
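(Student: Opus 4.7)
\medskip
\noindent\textbf{Proof plan.}
The plan is a standard compactness argument via mollification; its decisive ingredient is Lemma~\ref{lemma pe}, which supplies a uniform $L^2$ estimate in the absence of any divergence bound on $b$. First I would regularize $b$: let $\rho_\epsilon$ be a standard mollifier on $\R$ and set $b^\epsilon(t,x,\omega) := (b(t,\cdot,\omega) \ast \rho_\epsilon)(x)$. Since convolution in $x$ commutes with both Bochner and It\^o integrals, the decomposition (\ref{cond3-2}) is preserved with $f^\epsilon := f \ast_x \rho_\epsilon$ and $g^\epsilon := g \ast_x \rho_\epsilon$, and all the norms listed in Hypothesis~\ref{hyp1} are controlled uniformly in $\epsilon$. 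Each $b^\epsilon(t,\cdot,\omega)$ lies in $C_b^\infty(\R)$, so the flow $X_{s,t}^\epsilon$ of \eqref{itoass} with drift $b^\epsilon$ is a $C^\infty$-diffeomorphism.

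Next I would define approximate solutions by the method of characteristics,
\[
u^\epsilon(t,x) := u_0(Y_t^\epsilon(x))\,\partial_x Y_t^\epsilon(x), \qquad Y_t^\epsilon := (X_{0,t}^\epsilon)^{-1},
\]
which classically solves the continuity equation with drift $b^\epsilon$. The change of variables $z = Y_t^\epsilon(x)$ gives, $\PP$-almost surely,
\[
\int_\R |u^\epsilon(t,x)|^2\, dx = \int_\R |u_0(z)|^2\,|\partial_z X_{0,t}^\epsilon(z)|^{-1}\, dz, \qquad \int_\R |u^\epsilon(t,x)|\, dx = \|u_0\|_{L^1}.
\]
Taking expectation in the first identity and applying Lemma~\ref{lemma pe}---whose constants depend only on the norms preserved by mollification, and are therefore uniform in $\epsilon$---yields $\sup_{t\in[0,T]} \E \int_\R |u^\epsilon(t,x)|^2\,dx \le C\,\|u_0\|_{L^2}^2$. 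Combined with the $L^1$ identity, this gives a uniform bound in $L^\infty([0,T], L^2(\Omega\times\R)\cap L^1(\Omega\times\R))$.

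By Banach--Alaoglu, some subsequence $u^\epsilon$ converges weakly-$\ast$ to a limit $u$ in $L^\infty([0,T], L^2(\Omega\times\R))$, and $u$ will be the sought-after solution. I would pass to the limit term by term in the It\^o formulation of Remark~\ref{lemmaito}. The drift term is handled by the splitting
\[
\int u^\epsilon b^\epsilon \partial_x\varphi\,dx - \int u\, b\, \partial_x\varphi\,dx = \int (u^\epsilon - u)\, b\, \partial_x\varphi\,dx + \int u^\epsilon\,(b^\epsilon - b)\, \partial_x\varphi\,dx,
\]
where the first summand vanishes by weak convergence against $b\,\partial_x\varphi \in L^2(\Omega\times\R)$ (using that $b$ is bounded and $\varphi$ is compactly supported) and the second by Cauchy--Schwarz, combining the uniform $L^2$ bound on $u^\epsilon$ with the strong convergence $b^\epsilon \to b$ in $L^2_{\loc}$. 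The Brownian integral passes to the limit by viewing the It\^o integral as a bounded linear map into $L^2(\Omega)$ and applying it to the weakly convergent adapted integrands $s \mapsto \int u^\epsilon(s,x)\,\partial_x\varphi(x)\,dx$; the It\^o correction term $\tfrac12 \int u^\epsilon\,\partial_x^2 \varphi\,dx$ converges analogously.

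The main obstacle is the defining feature of the problem: without any assumption on $\dive b$, classical energy estimates collapse and standard approximation schemes fail to produce a bounded sequence in $L^2$. The Brownian perturbation circumvents this exactly through Lemma~\ref{lemma pe}, whose pathwise representation of $(\partial_x X_{s,t})^{-1}$ as an exponential of stochastic integrals converts the missing divergence control into a finite expectation of the inverse Jacobian, thereby powering the uniform bound that drives the whole argument.
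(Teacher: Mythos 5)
Your proposal is correct and follows essentially the same route as the paper: mollify the drift, represent the approximate solutions via the inverse stochastic flow, obtain the uniform $L^2$ bound from Lemma~\ref{lemma pe} by the change of variables, extract a weak limit, and pass to the limit in the It\^o weak formulation. The only cosmetic differences are that the paper additionally applies a cut-off and mollifies $u_0$ (so that Kunita's representation applies directly to smooth compactly supported data, whereas keeping $u_0\in L^2\cap L^1$ requires an extra but routine density argument), and that the paper delegates the term-by-term limit passage to \cite{FGP2} while you carry it out explicitly.
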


\begin{proof}

{\it Step 1: Regularization.}

 Let $\{\rho_\varepsilon\}_\varepsilon$ be a family of standard symmetric mollifiers and $\eta$ a nonnegative smooth cut-off function supported on the ball of radius 2 and such that $\eta=1$ on the ball of radius 1. Now, for every $\varepsilon>0$, we introduce the rescaled functions $\eta_\varepsilon (\cdot) = \eta(\varepsilon \cdot)$. Thus, we  define the family of  regularized coefficients given by
$$b^{\epsilon}(x) = \eta_\varepsilon(x) ( b \ast \rho_\varepsilon  (x)) $$
and
$$u_0^\varepsilon (x) = \eta_\varepsilon(x) \big(  u_0 \ast \rho_\varepsilon (x)  \big) \,.$$
Clearly we observe that, for every  $\varepsilon>0$, any element $b^{\varepsilon}$, $u_0^\varepsilon$ are smooth (in space) and have  compactly supported with bounded derivatives of all orders.  We consider the  regularized version of  stochastic continuity  equation given by :
\begin{equation}\label{STE-reg}
 \left \{
\begin{aligned}
    &d u^\varepsilon (t, x) +  Div\bigg(  u^\varepsilon (t, x)  \cdot \big( b^\varepsilon (t,x,\omega)  dt +
 \circ d B_{t} \big) \bigg)= 0\, ,
    \\[5pt]
    &u^\varepsilon \big|_{t=0}=  u_{0}^\varepsilon.
\end{aligned}
\right .
\end{equation}
Following the classical theory of H. Kunita \cite[Theorem 6.1.9]{Ku} we obtain that

\[
u^{\varepsilon}(t,x) =  u_{0}^{\varepsilon} (\psi_t^{\varepsilon}(x))  J\psi_t^{\varepsilon}(x)
\]
%\[
% =u_{0}^{\varepsilon} (\psi_t^{\varepsilon}(x)) \exp\bigg\{-\I_0^t \dive b^{\varepsilon}(\psi_{t-s}^{\varepsilon}(x)) ds \bigg\}
%\]
is the unique solution to the regularized equation \eqref{STE-reg}, where $\phi_t^{\varepsilon}$ is the flow associated  to the following stochastic differential equation (SDE):
\begin{equation*}
d X_t = b^\varepsilon (X_t) \, dt + d B_t \, ,  \hspace{1cm}   X_0 = x \,,
\end{equation*}
and $\psi_t^{\varepsilon}$ is the inverse of $\phi_t^{\varepsilon}$.

\bigskip

{\it Step 2: Boundedness.} Making the change of variables $y=\psi_t^{\varepsilon}(x)=(\phi_t^{\varepsilon}(x))^{-1}$ we have that
\begin{align*}
\int_{\R} \E[|u^{\varepsilon}(t,x)|^2]\,  dx & =\E \int_{\R} |u_{0}^{\varepsilon} (y)|^2  (J\phi_t^{\varepsilon}(x))^{-1}  dx.
\end{align*}

Now, by Lemma \ref{lemma pe} we obtain 
\begin{align}\label{eq0}
\int_{\R} \E[|u^{\varepsilon}(t,x)|^2]\,  dx \leq C \int_{\R} |u_{0}^{\varepsilon} (y)|^2  dx.
\end{align}

Therefore, the sequence $\{u^{\varepsilon}\}_{\varepsilon>0}$ is bounded in $u\in L^2(\Omega\times [0,T]\times \R )\cap L^\infty([0,T],L^{2}( \Omega\times \R))$  . Then  there exists a convergent subsequence, which we denote also by $u^{\varepsilon}$, such that converge weakly in $L^2(\Omega\times [0,T]\times \R)$ and weak-star in $L^\infty([0,T],L^{2}( \Omega\times \R))$ to some process $u\in L^2(\Omega\times [0,T]\times \R)\cap L^\infty([0,T],L^{2}( \Omega\times \R))$. Since this subsequence is bounded in 
$L^{1}(\Omega\times [0,T]\times \R)$ we follows that $u^{\varepsilon}$ converge to the measure $\mu$ and $\mu=u. $

\bigskip
{\it Step 3: Passing to the Limit.}
Now, if $u^{\varepsilon}$ is a solution of \eqref{STE-reg}, it is also a weak solution, that is, for any test function $\varphi\in C_0^{\infty}(\R)$, $u^{\varepsilon}$ verifies  (written in the Itô form):
\begin{align*}
\int_{\R} u^{\varepsilon}(t,x) \varphi(x) dx = &\int_{\R} u^{\varepsilon}_{0}(x) \varphi(x) \ dx + \int_{0}^{t} \!\! \int_{\R}   u^{\varepsilon}(s,x)   \, b^{\varepsilon}(s,x,\omega) \partial_x \varphi(x) \ dx ds \\
    & + \int_{0}^{t} \!\! \int_{\R}   u^{\varepsilon}(s,x) \ \partial_x \varphi(x) \ dx \, dB_s \,  + \frac{1}{2} \int_{0}^{t} \!\! \int_{\R}   u^{\varepsilon}(s,x) \ \partial_x^{2} \varphi(x) \ dx \, ds\,.
\end{align*}
Then, for prove existence of the equation  \eqref{trasport} is enough to pass to the limit in the above equation along the convergent subsequence found. This is made through of the same arguments of \cite[theorem 15]{FGP2}.

\end{proof}

\subsection{Uniqueness.}

In this section, we shall present a uniqueness theorem
for the SPDE (\ref{trasport}).  The proof is based on the characteristic method and
the commutator Lemma  to a primitive of the solution. We pointed that similar  arguments was used in previous work  
\cite{Moli}.

\begin{theorem}\label{uni2}
Under the conditions of hypothesis \ref{hyp1}, uniqueness holds for  $L^{2}$- weak solutions of the Cauchy problem \eqref{trasport} in the following sense: if $u,v$ are $L^{2}$- weak solutions with the same initial data $u_{0}\in  L^2(\R)\cap L^{1}(\R)$, then  $u= v$ almost everywhere in $ \Omega  \times [0,T] \times \R$.
\end{theorem}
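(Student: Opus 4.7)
The plan is to reduce the equation for $w := u - v$ to a transport equation for its spatial primitive and then run the method of stochastic characteristics. By linearity $w$ is an $L^2$-weak solution of (\ref{trasport}) with zero initial data, lying in $L^\infty([0,T], L^2(\Omega \times \R)) \cap L^1(\Omega \times [0,T] \times \R)$; in particular $w(t, \cdot, \omega) \in L^1 \cap L^2$ for almost every $(t, \omega)$. I define
\[
V(t, x, \omega) := \int_{-\infty}^{x} w(t, y, \omega)\, dy,
\]
which is continuous and bounded in $x$ with $\partial_x V = w \in L^2$; that is, $V$ is one derivative smoother than $w$. Testing the weak continuity equation for $w$ against antiderivatives of $\varphi \in C_c^\infty(\R)$ (with a cut-off at infinity and a passage to the limit using $w\in L^1\cap L^2$ and $b\in L^\infty$) shows that $V$ is a weak Stratonovich solution of the transport equation
\[
dV + b(t, x, \omega)\, \partial_x V\, dt + \partial_x V \circ dB_t = 0, \qquad V(0, \cdot) = 0.
\]
Crucially, $b\, \partial_x V \in L^2$ even though $b$ has no spatial smoothness.

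I next mollify in space: let $\rho_\ve$ be a standard mollifier and $V^\ve := V \ast \rho_\ve$. Since the Brownian increment is spatially constant, convolution commutes with the noise term, and $V^\ve$ satisfies
\[
dV^\ve + b\, \partial_x V^\ve\, dt + \partial_x V^\ve \circ dB_t = R^\ve\, dt, \qquad R^\ve := b\, (\partial_x V \ast \rho_\ve) - (b\, \partial_x V) \ast \rho_\ve.
\]
Because $V^\ve$ is smooth in $x$, the It\^o--Wentzell--Kunita formula applied to $V^\ve(t, X_{0,t}(x))$, with $X_{0,t}$ the forward characteristic flow of (\ref{itoass}), cancels the transport and noise contributions against $dX_{0,t}=b\,dt+dB_t$ and yields the representation
\[
V^\ve(t, X_{0,t}(x)) = V^\ve(0, x) + \int_0^t R^\ve(s, X_{0,s}(x))\, ds.
\]

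The final step is the limit $\ve \downarrow 0$. The commutator converges to zero by a simple Friedrichs-type fact: since $b \in L^\infty$ and $\partial_x V \in L^2$, the product $b\, \partial_x V$ lies in $L^2$, so both $b\, (\partial_x V \ast \rho_\ve) \to b\, \partial_x V$ and $(b\, \partial_x V) \ast \rho_\ve \to b\, \partial_x V$ in $L^2(\R)$, hence $R^\ve \to 0$ in $L^2$. Pairing the characteristic identity with a test function $\varphi \in C_c^\infty(\R)$ and performing the random change of variables $z = X_{0,s}(x)$ (producing the Jacobian $(\partial_x X_{0,s})^{-1}$), I bound the right-hand side by H\"older's inequality in terms of $\|R^\ve\|_{L^2}$ times a uniform moment bound on the inverse Jacobian; the negative-moment estimate of Lemma \ref{lemma pe} extends to arbitrary order by the same exponential-martingale computation (Novikov's condition is met because $b$ is bounded), so the right-hand side vanishes. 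The left-hand side converges to $V(t, X_{0,t}(x))$, and bijectivity of the flow then forces $V \equiv 0$, whence $w = \partial_x V \equiv 0$. \emph{The main difficulty} is the joint treatment of the low-regularity commutator and the stochastic change of variables: the classical DiPerna--Lions commutator is unavailable for $b \in L^\infty$ alone, and only trading one derivative of $b$ for one derivative of the solution (via the primitive $V$) renders Friedrichs applicable; the Jacobian moment bounds of Lemma \ref{lemma pe} are then indispensable to transport the estimate along the random characteristics.
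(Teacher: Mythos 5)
Your proposal follows the paper's own proof essentially step for step: reduction by linearity to zero initial data, passage to the primitive $V$, the cut-off argument showing $V$ solves the Stratonovich transport equation, spatial mollification producing a commutator, the It\^o--Wentzell--Kunita formula along characteristics, the Friedrichs-type convergence of the commutator to zero in $L^2$ (using only $b\in L^\infty$ and $\partial_x V\in L^2$), and a H\"older estimate controlled by negative moments of the Jacobian via Lemma \ref{lemma pe}.

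The one substantive point where your write-up has a gap is the choice of characteristics. You run the flow $X_{0,t}$ of the rough drift $b$ itself and then invoke the Jacobian $(\partial_x X_{0,s})^{-1}$ together with Lemma \ref{lemma pe}. But Lemma \ref{lemma pe} is stated and proved only for $b\in C_b^\infty$, and for $b$ that is merely bounded and integrable the very existence and integrability of $\partial_x X_{0,t}$ is a nontrivial matter (Sobolev differentiability of the flow, in the spirit of \cite{MNP14}) which the paper neither proves nor needs. The paper instead uses the flow $X^{\ve}_{s,t}$ of the mollified drift $b^{\ve}$, to which the lemma applies directly; the point of stating the constant in Lemma \ref{lemma pe} in terms of the Hypothesis \ref{hyp1} norms alone is precisely that these norms are stable under mollification, so the bound is uniform in $\ve$. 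Replacing $X_{0,t}$ by $X^{\ve}_{0,t}$ throughout (which turns your commutator into the paper's $b_\ve\,\partial_x V_\ve-(b\,\partial_x V)_\ve$, equally harmless for the Friedrichs step) closes your argument without any appeal to properties of the rough flow. A minor further remark: if, as in the paper, one first composes with the inverse flow to write $V_\ve(t,x)=\int_0^t \mathcal{R}_\ve(V,b)(Y^{\ve}_{s,t}(x),s)\,ds$ and only then pairs with $\varphi$, the H\"older step requires only the first negative moment of the Jacobian, exactly as furnished by Lemma \ref{lemma pe} and Remark 2.2, so no extension of the moment estimate to arbitrary order is actually needed.
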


\begin{proof}

{\it Step 0: Set of solutions.} We remark that the set of  $L^{2}$- weak solutions is a linear subspace of $L^{2}( \Omega\times[0, T]\times \mathbb{R})$, because the stochastic continuity equation is linear, and the regularity conditions is a linear constraint. Therefore, it is enough to show that a $L^{2}$- weak solution $u$ with initial condition $u_0= 0$ vanishes identically.

\bigskip

{\it Step 1:  Primitive of the solution.} We set

\[
V(t,x)=\int_{-\infty}^{x} u(t,y) \ dy.
\]

 We observe that $\partial_x V(t,x)=u(t,x)$  belong to $L^{2}( \Omega\times[0, T]\times \mathbb{R} )$. Now, we consider a  nonnegative smooth cut-off function $\eta$ supported on the ball of radius 2 and such that  $\eta=1$ on the ball of radius 1. For any $R>0$, we introduce the rescaled functions $\eta_R (\cdot) =  \eta(\frac{.}{R})$.

\bigskip

 For all  test functions  $\varphi\in C_0^{\infty}(\R)$ we obtain 
\[
 \int_{\R} V(t,x) \varphi(x) \eta_R (x)  dx = - \int_{\R} u(t,x)   \theta(x) \eta_R (x)  dx
-\int_{\R} V(t,x)   \theta(x) \partial_x \eta_R (x)  dx\,,
\]

\noindent where $\theta(x) =\int_{-\infty}^{x}  \varphi(y)   \ dy$. By  definition  of $L^{2}$-solutions , taking as test function $ \theta(x) \eta_R (x)$ we get 

\begin{align}\label{DISTINTSTRTR}
    \int_{\R} & V(t,x) \ \eta_R (x) \varphi(x) dx = - \int_{0}^{t} \!\! \int_{\R}   \partial_x V(s,x)   \, b (s,x,\omega) \eta_R (x) \varphi(x) \ dx ds \nonumber\\[5pt]
    & - \int_{0}^{t} \!\! \int_{\R}   \partial_x V(s,x) \  \eta_R (x) \varphi(x) \ dx \, {\circ}{dB_s} - \int_{0}^{t} \!\! \int_{\R}   \partial_x V(s,x)   \, b(s,x,\omega) \partial_x \eta_R (x) \theta(x)  \ dx ds\nonumber\\[5pt]
    &- \int_{0}^{t} \!\! \int_{\R}   \partial_x V(s,x) \  \partial_x \eta_R (x) \theta(x)  \ dx \, {\circ}{dB_s}-\int_{\R} V(t,x)   \theta(x) \partial_x \eta_R (x)  dx.
\end{align}

We observe that

\[
\int_{0}^{t} \!\! \int_{\R}   \partial_x V(s,x) \  \partial_x \eta_R (x) \theta(x)  \ dx \, {\circ}{dB_s}\rightarrow 0,
\]

\[
\int_{\R} V(t,x)   \theta(x) \partial_x \eta_R (x)  dx \rightarrow 0,
\]

as $R\rightarrow \infty$. Taking $L^{2}([0,T]\times \Omega)$-limit  in equation  (\ref{DISTINTSTRTR})  we have that

		\[
		\int_{\R} V(t,x) \varphi(x) dx 
		\]
		
\[
	=- \int_{0}^{t} \!\! \int_{\R}   \partial_x V(s,x)   \,  b(s,x,\omega)  \varphi(x) \ dx ds
- \int_{0}^{t} \!\! \int_{\R}   \partial_x V(s,x) \   \varphi(x) \ dx \, {\circ}{dB_s} .
\]
\bigskip

{\it Step 2: Smoothing.}
Let $\{\rho_{\varepsilon}(x)\}_\varepsilon$ be a family of standard symmetric mollifiers. For any $\varepsilon>0$ and $x\in\R^d$ we use $\rho_\varepsilon(x-\cdot)$ as test function, then we deduce 
$$
\begin{aligned}
      \int_{\R} V(t,y) \rho_\varepsilon(x-y) \, dy  = &\, - \int_{0}^{t}  \int_{\R} \big( b(s,y,\omega)  \partial_y V(s,y)  \big)  \rho_\varepsilon(x-y) \ dy ds
		\\[5pt]
    & -  \int_{0}^{t} \!\! \int_{\R} \partial_y V(s,y) \, \rho_\varepsilon(x-y)  \, dy \circ dB_s
\end{aligned}
$$

We denote   $V_\varepsilon(t,x)= (V\ast \rho_\varepsilon)(x)$, $b_\varepsilon(t,x,\omega)= (b \ast \rho_\varepsilon)(x)$ and
$(bV)_\varepsilon(t,x)= (b.V\ast \rho_\varepsilon)(x)$. Thus we have

\[
    V_{\varepsilon}(t,x) + \int_{0}^{t} b_{\epsilon}(s,x,\omega)  \partial_x V_{\varepsilon}(s,x) \,  ds   +  \int_{0}^{t}   \partial_{x}  V_{\varepsilon}(s,x) \, \circ dB_s
\]		
	
\[         
=\int_{0}^{t} \big(\mathcal{R}_{\epsilon}(V,b) \big) (x,s) \,  ds ,
\]

\noindent where we denote
$ \mathcal{R}_{\epsilon}(V,b)  = b_\varepsilon \ \partial_x V_\varepsilon  -  (b\partial_x V)_\varepsilon  $.

\bigskip

{\it Step 3: Method of Characteristics.}
Applying the It\^o-Wentzell-Kunita formula   to $ V_{\varepsilon}(t,X_{t}^{\epsilon})$
, see Theorem 8.3 of \cite{Ku2}, we have

\[
   V_{\varepsilon}(t,X_{t}^{\epsilon})  = \int_{0}^{t} \big(\mathcal{R}_{\epsilon}(V,b) \big) (X_s^{\epsilon},s)  ds  .
\]

Then, considering that $X_{t}^{\epsilon}=X_{0,t}^{\epsilon}$ and $Y_{t}^{\epsilon}=Y_{0,t}^{\epsilon}=(X_{0,t}^{\epsilon})^{-1}$ 
we deduce that 

\[
   V_{\varepsilon}(t,x)  = \int_{0}^{t} \big(\mathcal{R}_{\epsilon}(V,b) \big) (X_{0,s}^{\epsilon}(Y_{0,t}^{\epsilon}),s)  ds=\int_{0}^{t} \big(\mathcal{R}_{\epsilon}(V,b) \big) (Y_{s,t}^{\epsilon},s)  ds  .
\]

Multiplying by the test functions $\varphi$ and integrating in $\R$ we get

\begin{equation}
   \int V_{\varepsilon}(t,x) \ \varphi(x) dx  =
	\int_{0}^{t}  \int \big(\mathcal{R}_{\epsilon}(V,b) \big) (Y_{s,t}^{\epsilon},s) \  \  \varphi(x) \ \, dx \   ds  .
\end{equation}

 \noindent Finally  we observe that

\begin{equation}
	\int_{0}^{t}  \int \big(\mathcal{R}_{\epsilon}(V,b) \big) (Y_{s,t}^{\epsilon},s) \ \varphi(x) \ \, dx \   ds =
	\int_{0}^{t}  \int \big(\mathcal{R}_{\epsilon}(V,b) \big) (x,s) \   JX_{s,t}^{\epsilon}  \varphi(X_{s,t}^{\epsilon}) \ \, dx \   ds .
\end{equation}

\bigskip

{\it Step 4: Convergence of the commutator.}   Now, we observe that $\mathcal{R}_{\epsilon}(V,b)$ converge to zero in
$L^{2}([0,T]\times \Omega\times \R )$. In fact, we  get that

\[
  (b \ \partial_x V)_{\varepsilon} \rightarrow b \ \partial_x V \ in \ L^{2}([0,T] \times  \Omega \times \R ).
\]

\noindent Moreover,  we have 

\[
b_{\epsilon} \rightarrow b \  a.e,
\]

\[
b \ is \ bounded
\]

and

\[
\partial_x V_{\epsilon} \rightarrow   \partial_x V \ in \ L^{2}([0,T]\times \Omega\times \R ).
\]

Then by the dominated convergence theorem we obtain

\[
b_{\epsilon}   \partial_x V_{\varepsilon} \rightarrow b \ \partial_x V \ in \ L^{2}([0,T]\times  \Omega \times \R ).
\]

\bigskip

{\it Step 5: Conclusion.}  From step 3  we have 

\begin{equation}\label{conv}
   \int V_{\varepsilon}(t,x) \ \varphi(x) dx  =
		\int_{0}^{t}  \int \big(\mathcal{R}_{\epsilon}(V,b) \big) (x,s) \   JX_{s,t}^{\epsilon}  \varphi(X_{s,t}^{\epsilon}) \ \, dx \   ds .
\end{equation}

 By   H\"older's inequality we obtain 

\[
	\E \bigg|\int_{0}^{t}  \int \bigg(\mathcal{R}_{\epsilon}(V,b) \bigg) (x,s) \   JX_{s,t}^{\epsilon}  \varphi(X_{s,t}^{\epsilon}) \ \, dx \   ds \bigg|
	\]
	
	\[
	\leq   	 \bigg(\E \int_{0}^{t}  \int |\big(\mathcal{R}_{\epsilon}(V,b) \big) (x,s)|^{2} \  \, dx \   ds \bigg)^{\frac{1}{2}}
  \bigg(\E \int_{0}^{t}  \int  | JX_{s,t}^{\epsilon} \varphi(X_{s,t}^{\epsilon})|^{2} \ \, dx \ ds \bigg)^{\frac{1}{2}}
\]

\noindent  From step 4 we follow

\[
 \bigg(\E \int_{0}^{t}  \int |\big(\mathcal{R}_{\epsilon}(V,b) \big) (x,s)|^{2} \  \, dx \   ds \bigg)^{\frac{1}{2}}\rightarrow 0.
\]

From Lemma \ref{lemma pe} and Remark 2.2 we deduce 

\[
\bigg(\E \int_{0}^{t}  \int  | JX_{s,t}^{\epsilon} \varphi(X_{s,t}^{\epsilon})|^{2} \ \, dx \ ds \bigg)^{\frac{1}{2}}
=\bigg(\E \int_{0}^{t}  \int  | JY_{s,t}^{\epsilon}|^{-1} |\varphi(x)|^{2} \ \, dx \ ds \bigg)^{\frac{1}{2}}
\]
\[
\leq C  \bigg( \int |\varphi(x)|^{2} \ \, dx \  \bigg)^{\frac{1}{2}},
\]

Passing to the limit in equation (\ref{conv})   we deduced that  $V=0$. Then we conclude  that  $u=0$.

\end{proof}

\subsection{Stability}

To end up the well-posedness for the continuity equation
\eqref{trasport}, it remains to show the stability 
 property for the solution with respect to the initial datum.  We 
use the same ideas  that in the uniqueness proof.

\begin{theorem}\label{estaweak} Assume hypothesis \ref{hyp1}.  
Let $\{u_0^n\}$ be any sequence, with $u_0^n \in L^2(\R)\cap L^1(\R)$ $(n \geq 1)$, converging strongly  to 
$u_0 \in L^2(\R)\cap L^1(\R)$. Let $u(t,x)$, $u^n(t,x)$ be the unique weak $L^{2}-$solution of the Cauchy problem \eqref{trasport},
for respectively the initial data $u_0$ and $u_0^{n}$. Then, for all 
 $t \in  [0, T ]$, and for each function $\varphi \in C_c^\infty(\R^d)$ $\mathbb{P}-$ a.s.
$$
   \int_{\R^d} u^n(t,x) \, \vp(x) \ dx \quad \text{converges to} \quad  \int_{\R^d} u(t,x) \, \vp(x) \ dx \quad \text{$\mathbb{P}-$ a.s.}.
$$
\end{theorem}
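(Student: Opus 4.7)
By linearity of the equation and of the class of $L^{2}$-weak solutions (Step~0 of the proof of Theorem~\ref{uni2}), $w^n := u^n - u$ is itself an $L^{2}$-weak solution of \eqref{trasport} with initial datum $u_0^n - u_0$, which tends to $0$ strongly in $L^1(\R) \cap L^2(\R)$. It therefore suffices to prove that for each fixed $\varphi \in C_c^\infty(\R)$ and each $t \in [0,T]$,
\[
\int_\R w^n(t,x)\,\varphi(x)\,dx \longrightarrow 0 \quad \text{as } n\to\infty,
\]
and in fact we will obtain a bound that is uniform in $\omega$, which is strictly stronger than the $\mathbb{P}$-a.s.\ convergence claimed.

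Following Steps~1--3 of the uniqueness proof, introduce the primitive $W^n(t,x):=\int_{-\infty}^{x} w^n(t,y)\,dy$, which satisfies \eqref{aux} distributionally, and mollify it by the standard symmetric kernel $\rho_\varepsilon$ to obtain $W^n_\varepsilon$. Applying the It\^o--Wentzell--Kunita formula to $W^n_\varepsilon(t,X^\varepsilon_t)$, where $X^\varepsilon_t$ is the flow of the regularized SDE with drift $b^\varepsilon$, inverting through $Y^\varepsilon_{0,t}=(X^\varepsilon_{0,t})^{-1}$, testing against $\psi\in C_c^\infty(\R)$, and repeating the change of variables of Step~5 produces
\[
\int W^n_\varepsilon(t,x)\,\psi(x)\,dx
= \int W^n_\varepsilon\!\left(0,Y^\varepsilon_{0,t}(x)\right) \psi(x)\,dx
+ \int_0^t\!\!\int \mathcal{R}_\varepsilon(W^n,b)(x,s)\, JX^\varepsilon_{s,t}(x)\, \psi\!\left(X^\varepsilon_{s,t}(x)\right) dx\, ds,
\]
where $\mathcal{R}_\varepsilon(W^n,b)=b_\varepsilon (w^n)_\varepsilon - (b\, w^n)_\varepsilon$.

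Now send $\varepsilon\to 0$. The commutator double integral vanishes in $L^1(\Omega)$ exactly as in Steps~4--5 of the uniqueness proof: $w^n\in L^2$ and $b\in L^\infty$ give $\mathcal{R}_\varepsilon(W^n,b)\to 0$ in $L^2([0,T]\times\Omega\times\R)$, and the Jacobian factor is controlled via Lemma~\ref{lemma pe} after a change of variables. For the initial-data term the deterministic pointwise bound
\[
\|W^n_\varepsilon(0,\cdot)\|_{L^\infty(\R)} \le \|W^n(0,\cdot)\|_{L^\infty(\R)} \le \|u_0^n - u_0\|_{L^1(\R)}
\]
(which does not depend on $\varepsilon$ or $\omega$) gives at once the $\omega$-uniform estimate
\[
\left|\int W^n_\varepsilon\!\left(0,Y^\varepsilon_{0,t}(x)\right)\psi(x)\,dx\right| \le \|u_0^n-u_0\|_{L^1(\R)}\,\|\psi\|_{L^1(\R)},
\]
which is preserved in the limit $\varepsilon\to 0$. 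Combining these yields $\bigl|\int W^n(t,x)\psi(x)\,dx\bigr|\le \|u_0^n-u_0\|_{L^1}\|\psi\|_{L^1}$ $\mathbb{P}$-a.s.

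Finally, integration by parts with the choice $\psi=\varphi'$ (allowed since $\varphi\in C_c^\infty$) gives
\[
\left|\int (u^n-u)(t,x)\,\varphi(x)\,dx\right|
= \left|\int W^n(t,x)\,\varphi'(x)\,dx\right|
\le \|u_0^n-u_0\|_{L^1(\R)}\,\|\varphi'\|_{L^1(\R)} \longrightarrow 0,
\]
uniformly in $\omega$, whence the stated $\mathbb{P}$-a.s.\ convergence. The main obstacle is essentially bookkeeping: one must verify that the initial-data contribution keeps a deterministic $L^\infty$ bound \emph{despite} being composed with the random inverse flow $Y^\varepsilon_{0,t}$. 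This is free here because the bound is a supremum over all spatial points, so the particular (random) point at which the primitive is evaluated is irrelevant. The rest is a faithful replay of the commutator analysis already carried out for uniqueness.
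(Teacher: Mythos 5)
Your proposal is correct and follows essentially the same route as the paper: form the primitive of the difference $u^n-u$, mollify, run the commutator/characteristics argument of the uniqueness proof, and control the extra initial-data term. Your treatment is in fact somewhat more explicit than the paper's (the deterministic bound $\|W^n(0,\cdot)\|_{L^\infty}\le\|u_0^n-u_0\|_{L^1}$ for the term composed with the random inverse flow, and the final integration by parts against $\psi=\varphi'$ to pass from $W^n$ back to $u^n-u$), but the underlying argument is the same.
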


\begin{proof}
   We set

\[
V(t,x)=\int_{-\infty}^{x} u(t,y) \ dy,
\]

and

\[
V^{n}(t,x)=\int_{-\infty}^{x} u^{n}(t,y) \ dy.
\]
Then we have

\[
    \int_{\R} V(t,x) \varphi(x) dx 
	\]
		
\[		
	= \int u_{0}(x) \varphi(x) dx - \int_{0}^{t} \!\! \int_{\R}   \partial_x V(s,x)   \,  b(s,x,\omega)  \varphi(x) \ dx ds
- \int_{0}^{t} \!\! \int_{\R}   \partial_x V(s,x) \   \varphi(x) \ dx \, {\circ}{dB_s},
\]

and 

\[
    \int_{\R} V^{n}(t,x) \varphi(x) dx
		\]
		
		\[
	=\int u_{0}^{n}(x) \varphi(x) dx- \int_{0}^{t} \!\! \int_{\R}   \partial_x V^{n}(s,x)   \,  b(s,x,\omega)  \varphi(x) \ dx ds
- \int_{0}^{t} \!\! \int_{\R}   \partial_x V^{n}(s,x) \   \varphi(x) \ dx \, {\circ}{dB_s} .
\]

We set  $W^{n}=V(t,x)-V^{n}(t,x)$ and $W_{0}^{n}=u_{0}(x)-u_{0}^{n}(x)$. Thus we obtain  that $W^{n}$ verifies

\[
    \int_{\R} W^{n}(t,x) \varphi(x) dx
		\]
		
		\[
=	\int W_{0}^{n}(x) \varphi(x) dx- \int_{0}^{t} \!\! \int_{\R}   \partial_x W^{n}(s,x)   \,  b(s,x,\omega)  \varphi(x) \ dx ds
- \int_{0}^{t} \!\! \int_{\R}   \partial_x W^{n}(s,x) \   \varphi(x) \ dx \, {\circ}{dB_s} .
\]

We denote   $W_\varepsilon^{n}(t,x)= (W^{n}\ast \rho_\varepsilon)(x)$, $b_\varepsilon(t,x,\omega)= (b \ast \rho_\varepsilon)(x)$ and
$(bW^{n})_\varepsilon(t,x)= (b.V\ast \rho_\varepsilon)(x)$. Thus we have

\[
    W_{\varepsilon}^{n}(t,x) - W_{0}^{n,\epsilon}(x)  + \int_{0}^{t} b_{\epsilon}(s,x,\omega)  \partial_x W_{\varepsilon}^{n}(s,x) \,  ds   + 
		\int_{0}^{t}   \partial_{x}  W_{\varepsilon}^{n}(s,x) \, \circ dB_s =	
\]

  \[       
	\int_{0}^{t} \big(\mathcal{R}_{\epsilon}(W^{n},b) \big) (x,s) \,  ds ,
\]
\noindent where we denote
$ \mathcal{R}_{\epsilon}(W^{n},b)  = b_\varepsilon \ \partial_x W_\varepsilon^{n}  -  (b\partial_x W^{n})_\varepsilon  $.

Arguing as in the uniqueness proof we obtain

\begin{equation}\label{esta}
   \int W_{\varepsilon}^{n}(t,x) \ \varphi(x) dx  = \int W_{0}^{n,\epsilon}(Y_t^{\epsilon})(x) dx  +
	\int_{0}^{t}  \int \big(\mathcal{R}_{\epsilon}(W^{n},b) \big) (Y_{s,t}^{\epsilon},s) \  \  \varphi(x) \ \, dx \   ds  .
\end{equation}

Then  passing to the limit as $\epsilon\rightarrow 0$ and $n\rightarrow\infty$ in equation 
(\ref{esta}) we obtain

\[
  \lim_{n\rightarrow \infty} \int W(t,x) \ \varphi(x) dx  = 0,
\]

and this implies that

\[
  \lim_{n\rightarrow \infty} \int u^{n}(t,x) \ \varphi(x) dx  = \int u(t,x) \ \varphi(x) dx ,
\]

\end{proof}

\subsection{ Negative example} 

We considered the stochastic equation

\[
  \partial_t u(t, x) + Div  \big( ( b( x-B_{t}) + \frac{d B_{t}}{dt}) \cdot  u(t, x)  \big )= 0, 
\]

which is equivalent to the deterministic continuity  equation

\[
  \partial_t u(t, x) + Div  \big( ( b(x) u(t,x))= 0.  
\]

Then we do  not expect to obtain the regularization efect by noise. 

Now, we write the drift term in semimartingale form, appliyng the Ito formula to $b(x-B_t)$ we have

\[  
b(x-B_t)= b(x) - \int_{0}^{t} b^{\prime}(x-B_{t}) dB_{t}+ \frac{1}{2}   \int_{0}^{t} b^{\prime\prime}(x-B_{t}) ds.
\]

\noindent Using the notation in our hypothesis  we have $b^{\prime}=g$ and $b^{\prime\prime}=f$.Thus  we conclude that 
$b$ satisfies our hypothesis only  when it is regular.

\subsection{A possible extension.} 
The main our  tool in order to have estimations on the derivative of the flow was the It\^o-Wentzell-Kunita formula. 	
However,  it is possible  only to apply this  formula for compositions of semimartingales. In order to generalized our result for more general $b$ we have in mind to work in the context of the theory of  stochastic calculus  via regularization. This calculus  was introduced by 
  by F. Russo and P. Vallois ( see \cite{RVSem} as general reference ) and it have been studied and developed by
many authors.  In the paper of F.Flandoli and F. Russo they obtain a  It\^o-Wentzell-Kunita formula
for more general process, see \cite{Flan} for details.

%%%%%%%%%%%%%%%%%%%%%%%%
\section*{Acknowledgements}
%%%%%%%%%%%%%%%%%%%%%%%%
    Christian Olivera  is partially supported by FAPESP 
		by the grants 2017/17670-0. and 2015/07278-0.

%%%%%%%%%%%%%%%%%

\end{document}